%%%%%%%%%%%%%%%%%%%%%%%%%%%%%%%%%
%
% Pdf settings
%
%%%%%%%%%%%%%%%%%%%%%%%%%%%%%%%%%
%
\pdfpagewidth=8.5truein
\pdfpageheight=11truein
%
%%%%%%%%%%%%%%%%%%%%%%%%%%%%%%%%%

\documentclass[11pt]{article}
\usepackage{amssymb,amsmath,amsthm, bbm}
\usepackage{enumerate}
\usepackage[dvipsnames]{xcolor}
\usepackage{pgf,tikz}
\usepackage{mathrsfs}
\usetikzlibrary{arrows}
\usepackage[unicode=true]{hyperref}
\hypersetup{
	pdftitle={k-planar crossing number of random regular graphs},
	pdfauthor={long list},
	colorlinks
}

\definecolor{darkgreen}{rgb}{0,0.6,0}

\setlength{\marginparwidth}{.75in}
\let\oldmarginpar\marginpar
\renewcommand\marginpar[1]{\-\oldmarginpar[\raggedleft\footnotesize #1]%
{\raggedright\footnotesize #1}}

\setlength{\topmargin}{ -1.5cm}
\setlength{\oddsidemargin}{ -0.5cm}
\textwidth 17cm
\textheight 22.4cm

\newtheorem{theorem}{Theorem}

\newtheorem{lemma}[theorem]{Lemma}

\numberwithin{equation}{section}

\newcommand{\cro}{\textsc{cr}}

\newcommand{\old}[1]{{}}
%Marginal Note

\title{$k$-planar Crossing Number of Random Graphs and Random Regular Graphs}

\author{John~Asplund\footnotemark[1] \and Thao~Do\footnotemark[2] \and Arran~Hamm\footnotemark[3] \and L\'aszl\'o~Sz\'ekely \footnotemark[4] \and Libby~Taylor\footnotemark[5] \and Zhiyu~Wang\footnotemark[4]}

    \makeatletter
    \let\@fnsymbol\@arabic
    \makeatother

\newcommand{\vanish}[1]{}

\usepackage{verbatim}

\begin{document}

\footnotetext[1]{
Dalton State College, Department of Technology and Mathematics,
jasplund@daltonstate.edu
}
\footnotetext[2]{
Massachusetts Institute of Technology, Department of Mathematics, thaodo@mit.edu
}
\footnotetext[3]{
Winthrop University, Department of Mathematics, hamma@winthrop.edu
}
\footnotetext[4]{
University of South Carolina, Department of Mathematics, szekely@math.sc.edu and zhiyuw@math.sc.edu
}
\footnotetext[5]{
Georgia Institute of Technology, Department of Mathematics, libbytaylor@gatech.edu
}

\date{}
\maketitle

\begin{abstract}
We give an explicit extension of Spencer's result on the biplanar crossing number of the Erd\H{o}s-R\'enyi random graph $G(n,p)$. In particular, we show that the k-planar crossing number of $G(n,p)$ is almost surely $\Omega((n^2p)^2)$. Along the same lines, we prove that for any fixed $k$, the $k$-planar crossing number of various models of random $d$-regular graphs is $\Omega ((dn)^2)$ for $d > c_0$ for some constant $c_0=c_0(k)$. 
\end{abstract}

\section{Introduction}

Planar graphs have been heavily studied in the literature and their applications have sparked interdisciplinary work in a variety of fields, e.g., design problems for circuits, subways, and utility lines. The focus of this paper is a variation of the crossing number of a graph, which is itself a natural extension of planarity.
The \textit{crossing number} of a graph $G$, denoted $\cro(G)$, is the minimum number of edge crossings in a drawing of $G$ in the plane.
In particular, we will focus on the variation of crossing number known as the $k$-planar crossing number.  The $k$-planar crossing number of $G$, denoted $\cro_k (G)$, is defined as the minimum of $\cro(G_1)+\cdots + \cro(G_k)$ over all partitions of $G$ into $G_1\cup \cdots \cup G_k$.  The $k=2$ case is commonly referred to as the {\em biplanar crossing number}.

In this paper, we investigate the $k$-planar crossing number of two models of random graphs: Erd\H{o}s-R\'enyi random graphs and random $d$-regular graphs. Spencer, in \cite{spencer}, gave a lower bound on the biplanar crossing number of Erd\H{o}s-R\'enyi random graph $G(n,p)$.

\begin{theorem}\label{spenceThm1}
{\normalfont\cite{spencer}} 
There  are constants $c_0$ and $c_1$ such that for all $p\geq c_0/n$,  the biplanar crossing number $\cro_2(G)$, with $G \sim G(n,p)$, is with high probability at least $c_1(n^2p)^2$.
\end{theorem}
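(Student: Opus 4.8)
The plan is to derive the bound from two facts: that with high probability the bisection width of $G(n,p)$ is of order $n^2p$, and that for any graph the biplanar crossing number is bounded below by a constant times the square of the bisection width, minus a small degree correction. Throughout, for a graph $H$ on $n$ vertices let $b(H)$ denote its bisection width, the minimum of $e_H(A,B)$ over partitions $V(H)=A\sqcup B$ with $|A|=\lfloor n/2\rfloor$.

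First I would record the two needed properties of $G\sim G(n,p)$. Fix a balanced partition $V=A\sqcup B$; then $e_G(A,B)$ is a binomial random variable with mean $|A||B|p\ge\tfrac15 n^2p$, so a Chernoff bound gives $\pr\!\left[e_G(A,B)<\tfrac{1}{10}n^2p\right]\le e^{-\Omega(n^2p)}=e^{-\Omega(c_0 n)}$. Since there are fewer than $2^n$ balanced partitions, a union bound shows that for $c_0$ a sufficiently large absolute constant, with high probability $b(G)\ge\tfrac{1}{10}n^2p$. For the degrees, $\e\!\left[\sum_v d_v(G)^2\right]\le n^2p+n^3p^2=o\!\left((n^2p)^2\right)$ in the range $p\ge c_0/n$, so by Markov's inequality $\sum_v d_v(G)^2=o\!\left((n^2p)^2\right)$ with high probability as well.

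The second ingredient is the inequality
\[
\cro(H_1)+\cro(H_2)\;\ge\;\alpha\,b(H)^2-\beta\sum_v d_v(H)^2 ,
\]
valid for every graph $H$ and every partition of its edges $H=H_1\cup H_2$, with $\alpha,\beta>0$ absolute constants. This is the biplanar counterpart of the classical bound $\cro(H)=\Omega(b(H)^2)-O\!\left(\sum_v d_v(H)^2\right)$ of Bhatt--Leighton and of Pach--Spencer--T\'oth, and it is proved in the same spirit: realize $\cro(H_1)$ and $\cro(H_2)$ by drawings, planarize each drawing by replacing every crossing with a degree-$4$ vertex, and extract from the two planarizations a \emph{single} balanced bipartition of $V$ that is cheap for $H_1$ and for $H_2$ at once, by alternately applying the planar separator theorem to the two planarized pieces as they shrink; bounding the number of edges of $H$ cut by $O\!\left(\sqrt{\cro(H_1)+\cro(H_2)}+\sqrt{\sum_v d_v(H)^2}\right)$ and squaring yields the displayed inequality. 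If a biplanar version of this estimate is not available off the shelf, establishing it carefully is the technical heart of the argument, since the naive "one part carries half the edges" reasoning only gives the crossing-number-inequality bound $\Omega(n^4p^3)$, which is too weak when $p$ is close to $c_0/n$.

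Granting both ingredients, the theorem follows quickly. Let $(G_1,G_2)$ be an optimal biplanar partition of $G$, so $\cro_2(G)=\cro(G_1)+\cro(G_2)$. On the high-probability event of the second paragraph,
\[
\cro_2(G)\;\ge\;\alpha\,b(G)^2-\beta\sum_v d_v(G)^2\;\ge\;\tfrac{\alpha}{100}\,(n^2p)^2-o\!\left((n^2p)^2\right)\;\ge\;c_1(n^2p)^2
\]
for all large $n$, with $c_1=\alpha/200$. I expect the main obstacle to be the second ingredient: the separator argument must be run in the biplanar setting without losing any $\log n$ factor --- a logarithmic loss would be fatal here, since $c_1(n^2p)^2$ is only a constant factor below the trivial upper bound $\cro_2(G)\le\cro(G)=O\!\left((n^2p)^2\right)$ --- while the high-degree vertices of $G(n,p)$ must be kept under control, which is precisely why it matters that $\sum_v d_v(G)^2=o\!\left((n^2p)^2\right)$, so that the degree correction never competes with the leading term.
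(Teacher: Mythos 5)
Your first ingredient (a Chernoff-plus-union-bound lower bound on the bisection width of $G(n,p)$, together with control of $\sum_v d_v^2$) is fine, but your second ingredient is false, and the whole argument rests on it. The inequality $\cro(H_1)+\cro(H_2)\ge \alpha\, b(H)^2-\beta\sum_v d_v(H)^2$ cannot hold for arbitrary edge partitions $H=H_1\cup H_2$: take $H_1$ a Hamiltonian cycle on $V$ and $H_2$ a perfect matching chosen so that $H=H_1\cup H_2$ is an expander (a uniformly random matching works with high probability, since the union is essentially a random $3$-regular graph). Then $H_1$ and $H_2$ are planar, so $\cro(H_1)+\cro(H_2)=0$ and $\sum_v d_v(H)^2=O(n)$, while $b(H)=\Omega(n)$, contradicting the claimed bound. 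The structural reason is that the optimal bisections of $H_1$ and $H_2$ are in general \emph{different} cuts; the only relation valid for every partition is $b(H_1)+b(H_2)\le b(H)$ (any fixed balanced cut costs at least the optimum in each part), which goes in the wrong direction, so largeness of $b(G)$ alone can never force either $\cro(G_i)$ to be large. The planar-separator sketch cannot repair this: there is simply no single balanced cut that is cheap in both planarized drawings in the example above, so ``extracting one bipartition cheap for both pieces at once'' is not possible in general.

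What Spencer's proof (and this paper's $k$-planar version) does instead is to keep the two bisections separate and change the random-graph input. One takes the optimal $1/3$-$2/3$ bisections of $G_1$ and of $G_2$; a purely combinatorial statement (Lemma~\ref{lemma1}) yields two disjoint sets $A,B$, each of size at least $n/6$, lying on opposite sides of \emph{both} bisections, whence every edge of $G$ between $A$ and $B$ is cut by the bisection of whichever $G_i$ contains it, giving $e_G(A,B)\le b(G_1)+b(G_2)$. The probabilistic ingredient is then not a bound on $b(G)$ but the stronger property (Lemma~\ref{lemma2}: Chernoff plus a union bound over all $4^n$ pairs) that with high probability \emph{every} pair of disjoint sets of linear size spans $\Omega(n^2p)$ edges; this is where $p\ge c_0/n$ with $c_0$ large is used. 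Consequently some $b(G_i)=\Omega(n^2p)$, and Theorem~\ref{bisection} applied to that single $G_i$, together with the maximum-degree bound of Lemma~\ref{degreebound} (which makes $\sqrt{\sum_v d_v^2}=o(n^2p)$), gives $\cro(G_i)=\Omega((n^2p)^2)$. So your outer skeleton (bisection-width lower bounds for crossing numbers plus degree control) matches the paper, but you must replace the false ``biplanar bisection inequality'' by this two-bisections argument, and strengthen your edge-density input from ``balanced cuts of $G$ are large'' to ``all pairs of linear-size disjoint sets are dense.''
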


%\arran{I would prefer using $c$ and $d$ as the constants to avoid the subscript; I don't feel too strongly about this though.  I do, however, care about the following proposed change: we should either use w.h.p. or (a.)a.s. in each statement of the paper; switching between these is a bit sloppy.}

Spencer remarked that the methods used in Theorem \ref{spenceThm1} allow one to show that for all $k$, when $p\geq c_k/n$ for some $c_k$, $\cro_k(G)=\Omega((n^2p)^2)$ where $G \sim G(n,p)$. However, a few people in the community were unable to extend Spencer's proof, so in this paper, we give an explicit proof of the lower bound for the $k$-planar crossing number of $G(n,p)$ for arbitrary $k$. Throughout this paper, $o, O, \Omega$ are always for $n\rightarrow \infty$.

\begin{theorem}\label{k-planarErdosRenyi}
For all integers $k\geq 1$, there  are constants $c_0 = c_0(k)$ and $c_1 = c_1(k)$ such that for all $p\geq c_0/n$,  the $k$-planar crossing number of $G(n,p)$ is with high probability at least $c_1(n^2p)^2$.
\end{theorem}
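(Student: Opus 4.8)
I would combine two lower-bound mechanisms and select between them according to density. The first is the Crossing Lemma in the form $\cro(H)\ge c\,|E(H)|^{3}/|V(H)|^{2}-|V(H)|$ for an absolute $c>0$: for any decomposition $G=G_1\cup\cdots\cup G_k$, writing $m_i=|E(G_i)|$ and $m=|E(G)|$, convexity of $t\mapsto t^{3}$ gives $\sum_i\cro(G_i)\ge\frac{c}{n^{2}}\sum_i m_i^{3}-kn\ge\frac{c}{k^{2}n^{2}}m^{3}-kn$; since $m=\Theta(n^{2}p)$ whp this already forces $\cro_k(G)=\Omega\big(\tfrac{p}{k^{2}}(n^{2}p)^{2}\big)=\Omega((n^{2}p)^{2})$ once $p$ exceeds a constant depending only on $k$. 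So the substantive range is $c_0/n\le p=o(1)$, and for it I would use the following consequence of the planar separator theorem: any drawing of $H$ with $x$ crossings admits a bipartition of $V(H)$ into two parts each of size at least $|V(H)|/3$ that cuts at most $C\sqrt{x+\sum_v d_v(H)^{2}}$ edges, for an absolute $C>0$ — equivalently, if every such bipartition of $V(H)$ cuts at least $b$ edges then $\cro(H)\ge (b/C)^{2}-\sum_v d_v(H)^{2}$ (planarize the drawing and apply a weighted Lipton--Tarjan separator theorem).

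The probabilistic input is a quasirandomness property: for $c_0=c_0(k)$ large enough, whp $G\sim G(n,p)$ satisfies $e_G(S,T)\ge\delta n^{2}p$ for all disjoint $S,T\subseteq V$ with $|S|,|T|\ge n/6$, where $\delta>0$ is absolute — a Chernoff bound together with a union bound over the at most $4^{n}$ choices of $(S,T)$, which is exactly where $p\ge c_0/n$ with $c_0$ large is used. In particular every bipartition of $V$ into two parts of size at least $n/3$ cuts at least $\delta n^{2}p$ edges, so by the bound above $\cro(G)\ge(\delta n^{2}p/C)^{2}-\sum_v d_v(G)^{2}$; since $\sum_v d_v(G)^{2}=O(n^{3}p^{2})=o((n^{2}p)^{2})$ whp, this gives $\cro(G)=\Omega((n^{2}p)^{2})$, the case $k=1$. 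The whole point is to keep this under control once the drawing is distributed over $k$ planes.

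For $k=2$ this is Spencer's argument. Fix an optimal biplanar drawing, $G=G_1\cup G_2$, with $x_i$ crossings in the plane of $G_i$. From the drawing of $G_i$ extract a bipartition $(A_i,B_i)$ of $V$ with $|A_i|,|B_i|\ge n/3$ and $e_{G_i}(A_i,B_i)\le C\sqrt{x_i+\sum_v d_v(G_i)^{2}}=:t_i$; if some $t_i\ge\tfrac{\delta}{2}n^{2}p$ then, absorbing the $o((n^{2}p)^{2})$ degree term, $x_i=\Omega((n^{2}p)^{2})$ and we are done, so suppose both cuts are sparse in their own subgraph. Refine $V$ by the two bipartitions into the four atoms $A_1\cap A_2$, $A_1\cap B_2$, $B_1\cap A_2$, $B_1\cap B_2$; using only $|A_i|,|B_i|\in[n/3,2n/3]$, a short case check shows that one of the two ``diagonals'' consists of atoms each of size at least $n/6$, say $|A_1\cap A_2|,\,|B_1\cap B_2|\ge n/6$. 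By quasirandomness $e_G(A_1\cap A_2,\,B_1\cap B_2)\ge\delta n^{2}p$; but every such edge lies in $G_1$ or in $G_2$ and crosses \emph{both} bipartitions, hence is counted in $e_{G_1}(A_1,B_1)$ or in $e_{G_2}(A_2,B_2)$. Therefore $\delta n^{2}p\le t_1+t_2$, so $\max(t_1,t_2)\ge\tfrac{\delta}{2}n^{2}p$ and $\cro_2(G)=x_1+x_2\ge\max(x_1,x_2)=\Omega((n^{2}p)^{2})$.

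The passage to general $k$ is the main obstacle, and it is a genuine one: with $k$ bipartitions the common refinement has $2^{k}$ atoms, and already for $k=3$ one can arrange (for instance with three pairwise ``crossing'' balanced bipartitions) that no antipodal pair of atoms is simultaneously of linear size, so no single pair of vertex sets can be charged against all $k$ cuts at once. The natural route is induction on $k$: from the drawing of $G_1$ take a balanced $(A_1,B_1)$ with $e_{G_1}(A_1,B_1)$ below a small constant multiple of $n^{2}p$ (if it is not, then $x_1=\Omega((n^{2}p)^{2})$ and we stop), discard the parts of $G_1$ lying inside $A_1$ and inside $B_1$, and pass to the bipartite graph $\bigcup_{i\ge2}G_i[A_1,B_1]$ formed by the edges of $G_2,\dots,G_k$ running between $A_1$ and $B_1$; this equals $G[A_1,B_1]$ with only a small fraction of its edges removed, and carries drawings with $(k-1)$-planar crossing number at most $x_2+\cdots+x_k$, so it would suffice to know that its $(k-1)$-planar crossing number is $\Omega((n^{2}p)^{2})$. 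The difficulty, and where the bulk of the work lies, is that $G[A_1,B_1]$ is \emph{bipartite}, hence does not satisfy the quasirandomness property of the previous paragraph, and $(A_1,B_1)$ is produced adversarially by the drawing; so the inductive statement has to be a deterministic one about the wider class of (multipartite) quasirandom graphs that actually arises after $j$ peeling steps, it must be robust to deleting a small constant fraction of the edges and to restricting to the bipartite part of an arbitrary balanced cut, and one must check that this class is closed under the peeling step for every $j<k$. Setting up such an invariant and verifying its stability is exactly what is hidden in Spencer's one-line remark, and is the source of the difficulty mentioned just before Theorem~\ref{k-planarErdosRenyi}.
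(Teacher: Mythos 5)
Your dense-regime reduction via the Crossing Lemma and your $k\le 2$ argument are fine and essentially match Spencer (and the paper's $k=2$ case), but for $k\ge 3$ you stop at naming the obstacle rather than overcoming it, so the proposal has a genuine gap exactly where the theorem needs proving. The missing idea is not a peeling induction over a class of bipartite/multipartite quasirandom graphs (which, as you note, runs into the problem that $G[A_1,B_1]$ loses the quasirandomness property). Instead, the paper iterates the two-bipartition lemma while \emph{restricting the later subgraphs to the surviving vertex set}: from the optimal $1/3$-$2/3$ bisections of $G_1$ and $G_2$ one gets two antipodal sets of size $\ge n/6$ whose union is $Y_2$; then one takes the optimal $1/3$-$2/3$ bisection of the induced subgraph $G_3|_{Y_2}$ (not of $G_3$ on all of $V$) together with the partition of $Y_2$ into its two halves, and applies the same lemma inside $Y_2$; continuing, after step $i$ one has two antipodal sets of size $\ge n/(6\cdot 3^{i-2})$ separated by all bisections considered so far. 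This sidesteps your $2^k$-atom obstruction entirely — your $k=3$ example concerns three bisections of the whole vertex set, whereas here the third bisection is taken of $G_3$ restricted to $Y_2$, where Lemma~\ref{lemma1} applies afresh. Every $G$-edge between the final antipodal pair $A,B$ (each of size $\ge n/(6\cdot 3^{k-2})$) lies in some $G_i$ and is cut by the corresponding bisection, giving
\[
e_G(A,B)\le b(G_1)+b(G_2)+\sum_{i=3}^{k} b\bigl(G_i|_{Y_{i-1}}\bigr),
\]
so by the quasirandomness of $G(n,p)$ for pairs of disjoint sets of size $\ge n/(6\cdot 3^{k-2})$ (the constant only degrades geometrically in $k$, which is why no bipartite variant is ever needed), some term is $\Omega_k(n^2p)$; the bisection-width bound of Theorem~\ref{bisection}, the max-degree estimate, and $\cro\bigl(G_i|_{Y_{i-1}}\bigr)\le\cro(G_i)$ then yield $\cro_k(G)=\Omega_k((n^2p)^2)$. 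Without this restriction-and-iterate step (or some substitute you would have to supply and verify), your outline does not prove the theorem for any $k\ge 3$.
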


Along similar lines, we investigate the k-planar crossing number of several models of random $d$-regular graphs in Section~\ref{proofofthmrandRegSpencerThm}. The key ingredients of the proof involve Friedman's results on Alon's second eigenvalue conjecture in \cite{friedman}.
In particular, we consider $\mathcal{G}_{n,d}, \mathcal{H}_{n,d}, \mathcal{I}_{n,d}, \mathcal{J}_{n,d}$ and some related models. Please refer to \cite{friedman} for the definitions of $\mathcal{G}_{n,d}, \mathcal{H}_{n,d}, \mathcal{I}_{n,d}, \mathcal{J}_{n,d}$. For two families of probability spaces, $(\Omega_n, \mathcal{F}_n, \mu_n)_{n=1,2,\cdots}$ and $(\Omega_n, \mathcal{F}_n, \nu_n)_{n=1,2,\cdots}$ over the same sets $\Omega_{n}$ and sigma-algebras $\mathcal{F}_n$, denote $\mu = \{\mu_n\}$ and $\nu = \{\nu_n\}$. We say \emph{$\mu$ dominates $\nu$} if for any family of measurable events $\{E_n\}$, $\mu_n(E_n) \to 0$ implies $\nu_n(E_n) \to 0$. We say that $\mu$ and $\nu$ are \emph{contiguous} if $\mu$ dominates $\nu$ and $\nu$ dominates $\mu$. Following Friedman's notation, let $\mathcal{L}_n$ be a family of probability spaces of d-regular graphs on $n$ vertices that is dominated by $\mathcal{G}_{n,d}, \mathcal{H}_{n,d}, \mathcal{I}_{n,d}$ or $\mathcal{J}_{n,d}$.

Along similar lines as the proof of Theorem \ref{k-planarErdosRenyi}, we prove that the $k$-planar crossing number of the random $d$-regular graph $G$ in $\mathcal{L}_n$ is $\Omega(n^2d^2)$, where $n$ is the number of vertices of $G$ and $d$ is the degree of regularity.

\begin{theorem}\label{randRegSpencerThm}
For all integers $k\geq 1$, there  are constants $c_0 = c_0(k)$ and $c_1 = c_1(k)$ such that for all $d\geq c_0$,  the $k$-planar crossing number $\cro_k(G)$, with $G$ in $\mathcal{L}_n$, is with high probability at least $c_1(n^2d^2)$.
\end{theorem}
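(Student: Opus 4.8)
The plan is to use Friedman's theorem to place $G$ in the expander regime, and then to bound $\cro_k(G)$ from below by exhibiting, inside one of the $k$ colour classes, an induced subgraph that is still a good expander and hence has large ordinary crossing number. By Friedman's theorem and the assumed domination of $\mathcal L_n$ by one of $\mathcal G_{n,d},\mathcal H_{n,d},\mathcal I_{n,d},\mathcal J_{n,d}$, with high probability $G$ is an $(n,d,\lambda)$-graph with $\lambda\le 2\sqrt{d-1}+\epsilon$ for a small fixed $\epsilon>0$; from now on everything is deterministic. The expander mixing lemma (in its bipartite form for the bipartite-type models) gives $\bigl|e(G[S])-\tfrac{d|S|^2}{2n}\bigr|\le\tfrac{\lambda|S|}{2}$ for every $S\subseteq V(G)$, and we use only two consequences, valid once $d\ge c_0(k)$ is large in terms of $k$: every balanced bipartition of $V(G)$ has $\Omega(nd)$ edges across it, and the number of edges of $G$ lying inside a family of pairwise disjoint vertex classes of total size $n$, each of size at most $n/(16k)$, is at most $\tfrac{dn}{32k}+\tfrac{\lambda n}{2}$.

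Fix an optimal drawing, giving $G=G_1\cup\dots\cup G_k$ with $\sum_i\cro(G_i)=\cro_k(G)$, and set $\delta_0=\delta_0(k)=\Theta\bigl(1/(k\log k)\bigr)$. I claim some $G_i$ has an induced subgraph $G_i[S]$ with $|S|\ge n/(16k)$ and $\mathrm{bw}(G_i[S])\ge\delta_0|S|d$. Suppose not, so $\mathrm{bw}(G_i[S])<\delta_0|S|d$ for every $i$ and every $S$ with $|S|\ge n/(16k)$. For each $i$ recursively bisect $G_i$ for $r=\lfloor\log_2(16k)\rfloor+1=O(\log k)$ rounds; every piece that is split then still has size at least $n/(16k)$, so the split removes fewer than $\delta_0\cdot(\text{size})\cdot d$ edges of $G_i$, and over the $r$ rounds at most $r\delta_0 nd$ edges of $G_i$ are deleted. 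The surviving edges lie inside the final pieces, which are pairwise disjoint, of total size $n$, and each of size $<n/(16k)$, so summing over $i$ and using the second consequence,
\[
e(G)=\sum_i e(G_i)\ <\ k\,r\delta_0\,nd+\tfrac{dn}{32}+\tfrac{k\lambda n}{2}.
\]
With $\delta_0$ as above and $d\ge c_0(k)$ the right side is less than $nd/2=e(G)$, a contradiction; hence the claim holds.

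Now take such $i$ and $S$. The classical crossing-number/bisection-width inequality of Leighton and of Pach--Shahrokhi--Szegedy, $\cro(H)\ge c_1\,\mathrm{bw}(H)^2-c_2\sum_v d_H(v)^2$, applied with $H=G_i[S]$ --- for which $\sum_v d_H(v)^2\le|S|d^2\le nd^2$ and $\cro(G_i[S])\le\cro(G_i)$ --- yields
\[
\cro_k(G)\ \ge\ \cro(G_i)\ \ge\ c_1\bigl(\delta_0|S|d\bigr)^2-c_2 nd^2\ \ge\ c_1\delta_0^2\Bigl(\frac{n}{16k}\Bigr)^{2}d^2-c_2 nd^2 ,
\]
which for $n$ large is $\Omega\bigl(n^2d^2/(k^4\log^2 k)\bigr)=\Omega(n^2d^2)$. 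This gives the theorem with $c_0(k)=\Theta(k^2)$ and $c_1(k)=\Theta\bigl(1/(k^4\log^2 k)\bigr)$.

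The genuine obstacle is the claim that the $k$ colour classes cannot all be cheap to shatter into small pieces. The tempting shortcut --- take, for each $i$, the balanced bisection of $G_i$ furnished by $\cro(G_i)\le\cro_k(G)$ and intersect the $k$ resulting partitions --- fails, because a bisection optimal for $G_i$ says nothing about how many edges of $G_j$ with $j\ne i$ it cuts, and for an expander the common refinement of $k$ separate bisections typically cuts a positive fraction of $E(G)$. Likewise the $k$-planar Crossing Lemma gives only $\cro_k(G)=\Omega(nd^3)$, which is weaker than $\Omega(n^2d^2)$ whenever $d=o(n)$. What makes the counting argument go through --- running the recursion just $O(\log k)$ levels deep and charging the leftover within-piece edges to the mixing lemma instead of recursing all the way to singletons, which would cost a spurious factor $\log n$ and destroy the $n$-independence of $c_1$ --- together with the need to check that the extracted set $S$ has size $\Omega(n/k)$ rather than merely tending to infinity, is presumably where attempts to extend Spencer's sketch run into difficulty.
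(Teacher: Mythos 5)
Your argument is correct in substance, but it takes a genuinely different route from the paper. The paper follows Spencer's original scheme: it takes the optimal $1/3$-$2/3$ bisections of $G_1$ and $G_2$, applies the combinatorial lemma that two such bipartitions admit two sets of size $\geq n/6$ lying on opposite sides of both, and then iterates, restricting each successive colour class $G_{i+1}$ to the nested set $Y_i$ and bisecting only that restriction. This produces two sets $A,B$ of size $\geq n/(6\cdot 3^{k-2})$ with $e_G(A,B)$ bounded above by the sum of the $k$ bisection widths; the lower bound on $e_G(A,B)$ comes from Theorem~\ref{newthm3}, i.e.\ the expander mixing lemma plus Friedman's eigenvalue bound, and the conclusion follows from Theorem~\ref{bisection}. (Your closing remark about the ``tempting shortcut'' of intersecting the $k$ bisections of the full graphs is exactly the pitfall the nested restriction is designed to avoid.) You instead never use Spencer's two-bipartition lemma: you run a Leighton-style recursive bisection inside each colour class separately, and derive a contradiction by charging the edges that survive inside small pieces to the mixing-lemma bound $e(G[S])\leq \frac{d|S|^2}{2n}+\frac{\lambda|S|}{2}$, which forces some $G_i[S]$ with $|S|=\Omega(n/k)$ and bisection width $\Omega(|S|d/(k\log k))$; the PSS inequality then finishes as in the paper. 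What your route buys is quantitative: constants $c_0(k)=\Theta(k^2)$ and $c_1(k)=\Theta(1/(k^4\log^2 k))$, polynomial in $k$, whereas the paper's nested construction degrades like $9^{-k}$. Both proofs ultimately rest on the same probabilistic inputs (Friedman's theorem, the Alon--Chung/BMS mixing lemma) and the same bisection-width-to-crossing-number inequality.

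Two small repairs: since a $1/3$-$2/3$ bisection only guarantees that pieces shrink by a factor $2/3$, the recursion depth should be $r=\lceil\log_{3/2}(16k)\rceil$ rather than $\lfloor\log_2(16k)\rfloor+1$ (with your stated $r$ the final pieces need not have size below $n/(16k)$); this is still $O(\log k)$ and leaves $\delta_0=\Theta(1/(k\log k))$ and all conclusions intact. Also, the first ``consequence'' you list (every balanced bipartition of $G$ crosses $\Omega(nd)$ edges) is never used and can be dropped.
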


The proof of Theorem \ref{randRegSpencerThm} hinges on the following 
 result on the edge densities of random $d$-regular graphs.

\old{
\begin{theorem}\label{newthm3}
There is a constant $c_0$ such that for any $d \geq c_0$ the random regular graph $G \sim \mathcal{G}_{n,d}$ has the following property with high probability: For every pair of disjoint sets $X,Y \subseteq V(G)$, each of size $\frac{n}{6}$, there are at least $\frac{dn}{72}$ edges $\{x,y\} \in E(G)$ with $x \in X, y\in Y$.
\end{theorem}
}

\begin{theorem}\label{newthm3}
For every $k$, there is a constant $c_0(k)$ such that for $d \geq c_0(k)$ the random $d$-regular graph $G$ in $\mathcal{L}_n$ has the following property with high probability: For every pair of disjoint sets $X,Y \subseteq V(G)$, each of size at least $\frac{n}{6\cdot 3^{k-2}}$, there are at least $\frac{dn}{2(6\cdot 3^{k-2})^2}$ edges $\{x,y\} \in G$ with $x \in X, y\in Y$.
\end{theorem}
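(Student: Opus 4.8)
The plan is to deduce this from Friedman's near-Ramanujan bound together with the Expander Mixing Lemma. The key conceptual point is that once the spectral estimate is in force the density conclusion becomes deterministic and holds for \emph{all} pairs $X,Y$ simultaneously, so there is no union bound over the exponentially many choices of $(X,Y)$ to worry about --- which is exactly why such a clean statement is available for every $k$ with only $d\ge c_0(k)$.

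First I would transfer the spectral bound to $\mathcal{L}_n$. Write $d=\lambda_1(G)\ge\lambda_2(G)\ge\cdots\ge\lambda_n(G)$ for the adjacency eigenvalues of a $d$-regular graph $G$, and set $\lambda(G):=\max(|\lambda_2(G)|,|\lambda_n(G)|)$. Fixing $\epsilon=1$, Friedman's theorem gives that for each of $\mathcal{G}_{n,d},\mathcal{H}_{n,d},\mathcal{I}_{n,d},\mathcal{J}_{n,d}$ the probability of the event $\{\lambda(G)>2\sqrt{d-1}+1\}$ tends to $0$ as $n\to\infty$. Since $\mathcal{L}_n$ is, by definition, dominated by one of these families and domination preserves statements of the form ``probability $\to 0$'', this event is also negligible under $\mathcal{L}_n$. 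Hence with high probability $G$ in $\mathcal{L}_n$ satisfies $\lambda(G)\le 2\sqrt{d-1}+1\le 3\sqrt d$; I condition on this from now on.

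Next I would invoke the Expander Mixing Lemma: for any $d$-regular graph on $n$ vertices with parameter $\lambda=\lambda(G)$ and any two disjoint sets $S,T\subseteq V(G)$,
\[
e_G(S,T)\ \ge\ \frac{d\,|S|\,|T|}{n}-\lambda\sqrt{|S|\,|T|},
\]
where $e_G(S,T)$ is the number of edges with one endpoint in $S$ and one in $T$; being a spectral inequality, it applies to every pair at once under the event above. Now set $\alpha=\tfrac{1}{6\cdot 3^{k-2}}$ and let $X,Y$ be disjoint with $|X|,|Y|\ge\alpha n$, so that $t:=\sqrt{|X|\,|Y|}\ge\alpha n$ and $e_G(X,Y)\ge \tfrac{d}{n}t^2-\lambda t$. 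The right-hand side is an upward parabola in $t$ with vertex at $\tfrac{\lambda n}{2d}\le\tfrac{3n}{2\sqrt d}$, which is below $\alpha n$ as soon as $d>\tfrac{9}{4\alpha^2}$; on the range $t\ge\alpha n$ it is therefore increasing, giving
\[
e_G(X,Y)\ \ge\ \frac{d(\alpha n)^2}{n}-\lambda\,\alpha n\ =\ \alpha n\,(d\alpha-\lambda)\ \ge\ \alpha n\,\big(d\alpha-3\sqrt d\big).
\]
Choosing $c_0(k)=36(6\cdot 3^{k-2})^2=36/\alpha^2$ makes $3\sqrt d\le\tfrac12 d\alpha$ (and automatically $d>\tfrac{9}{4\alpha^2}$) whenever $d\ge c_0(k)$, so the last expression is at least $\tfrac12 d\alpha^2 n=\dfrac{dn}{2(6\cdot 3^{k-2})^2}$, which is precisely the asserted bound.

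I do not expect a serious obstacle; the work is bookkeeping. The two points that need care are: (i) confirming that Friedman's statement is literally available for whichever of $\mathcal{G},\mathcal{H},\mathcal{I},\mathcal{J}$ dominates $\mathcal{L}_n$ and that the notion of domination in the excerpt really does push the high-probability spectral event across; and (ii) noticing in the last display that although the error term $\lambda\sqrt{|X||Y|}$ also grows with the set sizes, the main term $d|X||Y|/n$ grows faster once $d$ is large compared to $1/\alpha^2$, so the worst case is the smallest admissible $|X|=|Y|=\alpha n$ and it suffices to verify the bound there.
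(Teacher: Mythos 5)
Your proposal is correct and follows essentially the same route as the paper: Friedman's eigenvalue bound (transferred to $\mathcal{L}_n$ by domination) combined with the expander mixing lemma, then a choice of $d\geq c_0(k)$ making the error term at most half the main term. Your monotonicity-in-$t$ argument handling sets of size \emph{at least} $\alpha n$ (rather than exactly $\alpha n$, which is all the paper's substitution $\alpha=\beta=\tfrac{1}{6\cdot 3^{k-2}}$ literally covers) is a small extra point of care, and your constant $c_0(k)$ differs from the paper's only in the unimportant numerical factor.
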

\medskip 

We will also need to use the notion of bisection width, a key tool used to set lower bounds
for crossing numbers.  The \textit{$1/3$-$2/3$ bisection width} of $G$, denoted $b(G)$, is defined as
\[
b(G)=\min_{\substack{V_1\sqcup V_2=V \\ |V_i|\geq n/3}}\{e(V_1,V_2)\},
\]
where $e(V_1,V_2)$ is number of edges between $V_1$ and $V_2$.

The bisection width can intuitively be thought of as the minimum number of edges of $G$ which must be removed in order to disconnect $G$ into two connected components of roughly equal size.
An optimal $1/3$-$2/3$ bisection is a partition realizing the $1/3$-$2/3$ bisection width.
This parameter on $G$ is used in the following theorem to give a lower bound on the crossing number.
\begin{theorem}\label{bisection}
{\normalfont\cite{PSS}} Let $G$ be a graph of $n$ vertices, whose degrees are $d_1,d_2,\ldots,d_n$. Then
\[b(G)\leq 10\sqrt{\cro(G)}+2\sqrt{\sum_{i=1}^nd_i^2}.\]
\end{theorem}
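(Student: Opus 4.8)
The plan is to reduce the statement to a quantitative, weighted edge-separator theorem for planar graphs, applied to a planarization of an optimal drawing of $G$. First I would fix a drawing $D$ of $G$ realizing $c := \cro(G)$ crossings; as usual we may assume $D$ is generic, so that every crossing involves exactly two edges and no three edges meet at a point. Planarize $D$ by inserting a new vertex at each of the $c$ crossing points, obtaining a plane graph $H$. Each original vertex $v_i$ keeps its degree $d_i$ (subdividing an edge does not change the degrees of its endpoints), while each of the $c$ new ``crossing vertices'' has degree exactly $4$. Hence
\[
\sum_{v \in V(H)} \deg_H(v)^2 \;=\; \sum_{i=1}^n d_i^2 \;+\; 16c .
\]

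Next I would invoke the weighted edge-separator theorem for planar graphs: assigning weight $1$ to each original vertex and weight $0$ to each crossing vertex (total weight $n$), there is a bipartition $V(H) = A \sqcup B$ in which each side carries weight at most $\tfrac23 n$ and the number of edges of $H$ between $A$ and $B$ is at most $C\sqrt{\sum_{v}\deg_H(v)^2}$ for an absolute constant $C$. Since the weights count only original vertices, setting $V_1 = A \cap V(G)$ and $V_2 = B \cap V(G)$ gives $|V_1|,|V_2| \le \tfrac23 n$, i.e. $|V_i| \ge n/3$, so $(V_1,V_2)$ is a legitimate $1/3$-$2/3$ split of $G$.

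Then I would transfer the cut back to $G$. Each edge $e=uv$ of $G$ corresponds to a path $P_e$ in $H$ obtained by subdividing $e$ at its crossing points, and these paths are pairwise edge-disjoint, since distinct edges of $G$ share only crossing vertices, never edges of $H$. If $u \in V_1$ and $v \in V_2$, then $P_e$ runs from $A$ to $B$ and so traverses at least one $A$--$B$ edge of $H$; by edge-disjointness, distinct cut edges of $G$ consume distinct $A$--$B$ edges of $H$. Therefore, using $\sqrt{a+b}\le\sqrt a+\sqrt b$,
\[
b(G) \;\le\; e(V_1,V_2) \;\le\; C\sqrt{\sum_{i=1}^n d_i^2 + 16c} \;\le\; 4C\sqrt{\cro(G)} + C\sqrt{\sum_{i=1}^n d_i^2},
\]
which has exactly the claimed shape once a sufficiently sharp separator constant is supplied.

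The main obstacle I expect is obtaining the explicit coefficients $10$ and $2$ rather than a mere $O(\cdot)$ bound: matching both constants simultaneously forces a carefully quantified version of the planar edge-separator theorem, together with attention to the $2/3$ balance threshold and possibly a refined two-parameter bound that separates the crossing contribution from the degree contribution. The planarization, the degree-sum identity, and the edge-disjoint transfer of the cut are all routine; the heart of the argument is thus a constant-tracked separator statement. (A vertex-separator variant combined with Cauchy--Schwarz on the degrees of separator vertices is also available, but it introduces extra $\sqrt{n+c}$ factors and yields worse constants, so the edge-separator route above is the cleaner path to the stated inequality.)
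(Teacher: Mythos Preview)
The paper does not prove this theorem at all: it is quoted verbatim from \cite{PSS} and used as a black box, so there is no ``paper's own proof'' to compare against. Your sketch is in fact essentially the argument of the original source---planarize an optimal drawing by inserting degree-$4$ dummy vertices at the $c$ crossings, observe that $\sum_{v\in V(H)}\deg_H(v)^2=\sum d_i^2+16c$, apply a weighted planar edge-separator to $H$, and pull the cut back to $G$ via the edge-disjoint subdivision paths---so the overall strategy is correct and standard.

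Your own caveat about the constants is the only real issue: getting exactly $10$ and $2$ requires citing a separator theorem with a specific constant and then tracking it through $\sqrt{16c+\sum d_i^2}\le 4\sqrt{c}+\sqrt{\sum d_i^2}$, so the two coefficients are not independent (you need $C\le 2$ on the degree term and $4C\le 10$ on the crossing term simultaneously). In \cite{PSS} this is handled by invoking a sufficiently sharp version of the Gazit--Miller/Alon--Seymour--Thomas edge separator; absent that citation your argument yields the inequality only up to absolute constants, which is all the present paper actually needs.
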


\section{$k$-planar crossing number of $G(n,p)$}\label{kplanarRandomGraph}

The proof goes by a sequence of lemmas.

\begin{lemma}\label{lemma1}
{\normalfont\cite{spencer}} Let $X$ be an $m$-element set and let $X_{11}\cup X_{12}$ and $X_{21}\cup X_{22}$ be any two bipartitions of $X$ such that each of $X_{11},X_{12},X_{21}$ and $X_{22}$ has size at least $m/3$.  Then there exist two subsets $Y_1$ and $Y_2$ of $X$, each of size at least $m/6$, which lie on different sides of both bipartitions.  That is, either $Y_1\subseteq X_{11} \cap X_{21}, Y_2\subseteq X_{12} \cap X_{22}$ or $Y_1\subseteq X_{11} \cap X_{22}$, $Y_2\subseteq X_{12} \cap X_{21}$.
%If $X$ is an $m$-element set, then for any two bipartitions of $X$, in which every partition class has at least $m/3$ elements, there are two subsets of $X$, each with at least $m/6$ elements, such that these two sets are subsets of different classes in both partitions.
\end{lemma}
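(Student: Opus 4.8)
The plan is to reduce the statement to a one-line pigeonhole argument on the four cells cut out by intersecting the two bipartitions. First I would introduce
\[
A = X_{11}\cap X_{21},\qquad B = X_{11}\cap X_{22},\qquad C = X_{12}\cap X_{21},\qquad D = X_{12}\cap X_{22}.
\]
Because each of the two bipartitions is itself a partition of $X$, these four cells partition $X$, so $|A|+|B|+|C|+|D| = m$, and the four hypotheses $|X_{ij}|\ge m/3$ translate verbatim into
\[
|A|+|B|\ \ge\ \tfrac{m}{3},\qquad |C|+|D|\ \ge\ \tfrac{m}{3},\qquad |A|+|C|\ \ge\ \tfrac{m}{3},\qquad |B|+|D|\ \ge\ \tfrac{m}{3},
\]
coming respectively from $X_{11}$, $X_{12}$, $X_{21}$, and $X_{22}$. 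It is convenient to picture the cells as a $2\times 2$ grid whose rows are indexed by the first bipartition and whose columns by the second; then the four displayed inequalities bound every row-sum and every column-sum below by $m/3$.

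Next I would observe that the conclusion is exactly the dichotomy that one of the two diagonals of this grid consists of two cells each of size at least $m/6$: if $|A|\ge m/6$ and $|D|\ge m/6$ we take $Y_1\subseteq A = X_{11}\cap X_{21}$ and $Y_2\subseteq D = X_{12}\cap X_{22}$ (the first alternative), and if instead $|B|\ge m/6$ and $|C|\ge m/6$ we take $Y_1\subseteq B$ and $Y_2\subseteq C$ (the second alternative). Since each relevant cell has size at least $m/6$, choosing $Y_1,Y_2$ to be any subsets of size $\lceil m/6\rceil$ (or simply the whole cells) meets the size requirement, and as the two cells involved are disjoint so are $Y_1$ and $Y_2$.

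It remains to prove the dichotomy, which I would do by contradiction. If both alternatives fail then $(|A|<m/6 \text{ or } |D|<m/6)$ and $(|B|<m/6 \text{ or } |C|<m/6)$; distributing the conjunction yields four cases, in each of which two cells lying in a common row or common column are both smaller than $m/6$. Concretely, $\{A,B\}$ and $\{C,D\}$ are the two rows while $\{A,C\}$ and $\{B,D\}$ are the two columns, so each case forces the corresponding $X_{ij}$ to have total size strictly below $2\cdot(m/6)=m/3$, contradicting the matching hypothesis. I expect the only point needing care is this bookkeeping: one must check that negating the disjunction of the two conjunctions produces precisely the four row/column pairs (never the diagonal pairs $\{A,D\}$ or $\{B,C\}$, which are unconstrained by the hypotheses), and that the threshold $m/6$ is exactly half of $m/3$, so that two sub-threshold cells cannot sum to the required $m/3$. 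Once this small table is laid out the contradiction is immediate and the lemma follows.
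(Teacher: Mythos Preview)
Your argument is correct: the four cells $A,B,C,D$ partition $X$, the hypotheses bound each row- and column-sum by $m/3$, and negating the conclusion forces in every case two cells in a common row or column to each fall below $m/6$, contradicting the corresponding bound. The bookkeeping you flag is accurate --- the four cases are exactly the four row/column pairs, never a diagonal.

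As for comparison: the paper does not give its own proof of this lemma. It is stated with a citation to Spencer and used as a black box. Your pigeonhole argument is the natural (and presumably the original) one, so there is nothing further to compare.
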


The following lemma is a slight variation of Theorem~3 in \cite{spencer}.

\begin{lemma}\label{lemma2}
When $p=\Omega(1/n)$, the random graph $G(n,p)$ satisfies the following property w.h.p.:
for any fixed $k$ and for every pair of disjoint vertex sets $X,Y$, each of size at least $t=n/(6\cdot 3^{k-2})$, there are at least $\frac{1}{2}\binom{t}{2} p$ %$\frac{pn^2}{2(6\cdot 3^{k-2})^2}$ 
edges $\{x,y\} \in G$ with $x \in X, y\in Y$.
\end{lemma}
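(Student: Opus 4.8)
The plan is to fix a pair of disjoint vertex sets $X,Y$ with $|X|=|Y|=t=n/(6\cdot 3^{k-2})$ (it suffices to check the threshold size, since if the sets are larger the conclusion only gets easier once we know it for subsets of size $t$), and bound the probability that $e(X,Y)<\frac12\binom{t}{2}p$, and then take a union bound over all such pairs. The number of edges between $X$ and $Y$ is a binomial random variable $\mathrm{Bin}(t^2,p)$ — actually $\mathrm{Bin}(|X||Y|,p)$, and with $|X|=|Y|=t$ this is $\mathrm{Bin}(t^2,p)$, whose mean is $t^2p$. Note $t^2p \ge \binom{t}{2}p$, so we really want a lower-tail Chernoff bound: the probability that $\mathrm{Bin}(t^2,p)$ drops below half of $\binom{t}{2}p$, equivalently below roughly a quarter of its mean, is at most $\exp(-c\,t^2p)$ for an absolute constant $c>0$ (e.g. $c=1/8$ works for deviations below half the mean, and $\binom{t}{2}p \ge \frac12 t^2 p$ when $t\ge 2$, so one can be generous with constants here).

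Next I would run the union bound. The number of ways to choose the ordered pair $(X,Y)$ of disjoint $t$-sets is at most $\binom{n}{t}^2 \le 2^{2n}$, or more crudely $n^{2t}=\exp(2t\ln n)$. So the failure probability is at most
\[
\exp\!\lp 2t\ln n\rp\cdot \exp\!\lp -c\,t^2 p\rp = \exp\!\lp 2t\ln n - c\,t^2 p\rp .
\]
Since $t=n/(6\cdot 3^{k-2})$ is a constant multiple of $n$ (for fixed $k$), we have $t^2 p = \Theta(n^2 p)$ and $t\ln n = \Theta(n\ln n)$. Because $p=\Omega(1/n)$, we get $t^2p=\Omega(n)$, which is not automatically enough to beat $n\ln n$; however $p\ge c_0/n$ gives $t^2p \ge c_0 n/(6\cdot 3^{k-2})^2$, still linear in $n$. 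The honest fix is that the statement should be read with $c_0=c_0(k)$ chosen large enough — but in fact $\ln n$ term is the issue only if $p$ is as small as $\Theta(1/n)$; rereading the hypothesis ``$p=\Omega(1/n)$'' together with the role of this lemma, one sees we want the exponent to be negative, i.e. $c\,t^2p > 2t\ln n + \omega(1)$, equivalently $p = \omega(\ln n/n)$. So I would either (i) state the lemma for $p=\omega(\ln n / n)$, or (ii) keep $p=\Omega(1/n)$ but observe that for this regime one instead uses $|X|,|Y|$ of size $\Theta(n)$ and the weaker target $\frac12\binom t2 p$, absorbing the $\ln n$ loss into the constant hidden in $\Omega(1/n)$ — matching how Spencer's Theorem~3 is actually used. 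I will follow whichever convention the surrounding argument (Lemma~\ref{lemma1} and the eventual bisection-width bound) requires; the cleanest is to carry the union bound with $\exp(2t\ln n - c t^2 p)\to 0$.

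The main obstacle is bookkeeping the interplay between the three scales $\ln n$, $n$, and $n^2p$ so that the union-bound exponent is genuinely negative and tends to $-\infty$, which pins down exactly how large $c_0(k)$ (or how much larger than $1/n$ the probability $p$) must be; once that is settled, the Chernoff estimate and the counting bound are completely routine. A minor additional point is handling sets of size \emph{at least} $t$ rather than exactly $t$: I would note that if the conclusion holds for every pair of $t$-subsets $X'\subseteq X$, $Y'\subseteq Y$, then $e(X,Y)\ge e(X',Y')\ge \frac12\binom t2 p$, so it suffices to prove it for sets of size exactly $t$, which is what the union bound above does.
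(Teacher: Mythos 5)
Your overall strategy --- a lower-tail Chernoff bound for the $\mathrm{Bin}(|X||Y|,p)$ edge count between $X$ and $Y$, followed by a union bound over pairs $(X,Y)$, with a monotonicity reduction to sets of size exactly $t$ --- is exactly the paper's proof. The problem is the bookkeeping in your union bound, which leads you to the false conclusion that the lemma needs $p=\omega(\log n/n)$ or a reformulation. The number of ordered pairs of disjoint sets $(X,Y)$ is at most $3^n\le 4^n=e^{O(n)}$ (each vertex is in $X$, in $Y$, or in neither); you even wrote down $\binom{n}{t}^2\le 2^{2n}$, but then switched to the needlessly crude $n^{2t}=e^{2t\ln n}$, and it is only this spurious $n\log n$ term that creates the apparent conflict with $p=\Theta(1/n)$. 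With the correct count the failure probability is at most $\exp\lp 2n\ln 2-c\,t^2p\rp$ with $t=n/(6\cdot 3^{k-2})$, and since $t^2p\ge c_0\,n/(6\cdot 3^{k-2})^2$ when $p\ge c_0/n$, this tends to $0$ once the constant $c_0=c_0(k)$ is large enough --- which is precisely how ``$p=\Omega(1/n)$'' is meant here and the regime in which the lemma is invoked in Theorem~\ref{k-planarErdosRenyi} ($p\ge c_0(k)/n$). The paper's proof is exactly this computation: $M=t^2$ independent trials yield fewer than $Mp/2$ successes with probability at most $e^{-Mp/8}$, and $4^n$ bounds the number of choices of $(X,Y)$.

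Consequently your option (i) (strengthening the hypothesis to $p=\omega(\log n/n)$) is both unnecessary and harmful, since it would no longer support Theorem~\ref{k-planarErdosRenyi}, which is claimed for all $p\ge c_0/n$; and your option (ii) is resolved not by ``absorbing a $\log n$ loss'' but by observing that there is no $\log n$ loss at all: choosing subsets of linear size costs only $e^{O(n)}$, not $e^{\Theta(n\log n)}$. The rest of your write-up (the reduction to size exactly $t$, the observation that $\tfrac12\binom{t}{2}p$ is below half the mean $|X||Y|p$, and the Chernoff constant) is fine and matches the paper.
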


%\arran{In the lemma we could state ``each of size at least $t:=n/(6\cdot 3^{k-2})$, there are at least $\binom{t}{2}p$ edges between these sets.''  Also, we seem to need a caveat like the one given in Lemma \ref{degreebound} (i.e. $p=\Omega(1/n)$).}

\begin{proof}
Large deviation inequalities (see, e.g. Theorem A.1.13 of the Appendix of \cite{AS})  provides that $M$ independent trials, each  with probability $p$, provides fewer than $Mp/2$ successes with probability at most $e^{-Mp/8}$. Hence the number of edges between our sets is under $(1/2)pn^2/(6\cdot 3^{k-2})^2$ with probability at most $e^{- pn^2/(288\cdot 9^{k-2})    }$. This bound, combined with an upper bound 
$4^n$ to the number of choices of $X,Y$, establishes the lemma.
\end{proof}

It is clear that the expected degree of each vertex is $(n-1)p$. %$\mathbb{E}[\deg(v)]$ for the degree of a vertex $v$ is $(n-1)p$ for all vertices $v$.
%\arran{It is clear that the expected degree of each vertex is $(n-1)p$.}
The following bound on the maximum degree of $G(n,p)$ follows from the Chernoff bound for independent Bernoulli random variables.

\begin{lemma}\label{degreebound}
In $G(n,p)$ with $p>m/n$ for some constant $m$, with high probability %\arran{w.h.p.} 
the maximum degree in $G(n,p)$ is at most $(1+\log n)np$. 
\end{lemma}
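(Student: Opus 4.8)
The plan is a standard union bound built on the multiplicative Chernoff inequality for the upper tail of a binomial. First I would fix a vertex $v$. Its degree $\deg(v)$ is the sum of $n-1$ independent $\mathrm{Bernoulli}(p)$ indicators, hence $\deg(v)\sim\mathrm{Bin}(n-1,p)$ with mean $\mu:=(n-1)p$. Since $p>m/n$ we have $\mu\ge m(1-1/n)\ge m/2$ for $n\ge 2$; and since $np\ge\mu$ we have $(1+\log n)np\ge(1+\log n)\mu$. So it suffices to control $\pr[\deg(v)\ge(1+\log n)\mu]$ for each $v$ and then sum over the $n$ vertices.

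For the single-vertex estimate I would invoke the Chernoff bound (see, e.g., the Appendix of \cite{AS}): for every $\delta>0$,
\[
\pr\big[\deg(v)\ge(1+\delta)\mu\big]\le\left(\frac{e^{\delta}}{(1+\delta)^{1+\delta}}\right)^{\mu}.
\]
Taking $\delta=\log n$ and using monotonicity of the tail gives $\pr[\deg(v)\ge(1+\log n)np]\le\pr[\deg(v)\ge(1+\log n)\mu]\le\big(n\,(1+\log n)^{-(1+\log n)}\big)^{\mu}$. The point is that $(1+\log n)^{1+\log n}=\exp\big((1+\log n)\log(1+\log n)\big)$ beats every fixed power of $n$, so for any constant $C$ and all large $n$ we have $n\,(1+\log n)^{-(1+\log n)}\le n^{-C}$, whence the per-vertex failure probability is at most $n^{-C\mu}\le n^{-Cm/2}$. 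Choosing $C>2/m$ and summing over the $n$ vertices,
\[
\pr\big[\exists\,v:\ \deg(v)\ge(1+\log n)np\big]\le n\cdot n^{-Cm/2}=n^{1-Cm/2}\to 0
\]
as $n\to\infty$, which is the claim. Equivalently, one may simply note that the per-vertex bound is $n^{-(1+o(1))\mu\log\log n}=n^{-\omega(1)}$, which trivially survives the union bound.

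There is no genuine obstacle here; the only subtlety worth flagging is that $\mu$ is only guaranteed to be a constant of order $m$ — it does not grow with $n$ — so one must not waste the $\log n$ in the threshold: the Chernoff bound has to be exploited at the level of $(1+\log n)^{1+\log n}$ being super-polynomial in $n$, so that raising it to the constant power $\mu$ still produces a probability small enough to absorb the factor $n$ from the union bound. Everything else is routine bookkeeping with the Chernoff constants.
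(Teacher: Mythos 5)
Your proof is correct and follows essentially the same route as the paper: the multiplicative Chernoff bound with $\delta=\log n$ applied to $\deg(v)\sim\mathrm{Bin}(n-1,p)$, followed by a union bound over the $n$ vertices, exploiting that $(1+\log n)^{1+\log n}$ is super-polynomial in $n$. The only (minor) difference is that you track the constant mean $\mu\ge m/2$ explicitly so the argument works for any fixed $m>0$, whereas the paper's write-up implicitly assumes $p>2/n$ so that the exponent $(n-1)p\ge 1$.
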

\begin{proof}
If $X$ is the sum of $n-1$ independent Bernoulli random variables, which take value $1$ with probability $p$, then for every $\delta>0$ the Chernoff bound gives that 
$$\mathbb{P}[X>(1+\delta)(n-1)p]\leq \Biggl(\frac{e^\delta}{{(1+\delta)}^{1+\delta}}\Biggl)^{(n-1)p}.$$
Applying this bound with $\delta=\log n$ gives an upper bound of $n^{-\log \log n +1}$ on the probability that a vertex has degree above $pn(1+\log n)$, if $p>2/n$.  The union bound then gives that the expected number of vertices with degree above $pn(1+\log n)$ is $n^{-\log \log n +2}=o(1)$ which proves the assertion.
%A%pply this bound with $\delta=\log n$. The right hand side of the formula, which is an upper 
%bound for the probability that a fixed vertex has degree above $pn\log n$,
%is less than $n^{-\log \log n +1}$, if $p>2/n$. Hence the probability of 
%the existence of a vertex with degree above $pn\log n$ converges to 0 as 
%$n$ increases.
%\arran{Applying this bound with $\delta=\log n$ gives an upper bound on the probability that a vertex has degree above $pn\log n$ of $n^{-\log \log n +1}$, if $p>2/n$.  The union bound then gives that the expected number of vertices with degree above $pn\log n$ is $n^{-\log \log n +2}=o(1)$ which proves the assertion.}
\end{proof}

This condition on maximum degree is necessary in order to use Theorem \ref{bisection} to provide the lower bound on the crossing number.  In order for Theorem \ref{bisection} to give $\cro_k(G)=\Omega((n^2p)^2)$, it must be the case that $\sqrt{\sum d_i^2}=o(b(G))$.  If $\Delta$ denotes the maximum degree of a vertex in $G$ %\arran{this seems nonstandard; why not use $\Delta(G)$?}
, then it is sufficient that $\Delta\sqrt{n}=o(n^2p)$, which is satisfied when $\Delta\leq (1+\log n)np$ and $p>m/n$ for a sufficiently large constant $m$.

\begin{proof}[Proof of Theorem~{\normalfont\ref{k-planarErdosRenyi}}]
Let $G_1\cup G_2\cup \cdots \cup G_k$ be the 
partition of the edges of our sample $G$ of $G(n,p)$ that realizes
the  $k$-planar crossing number of $G$. We assume without loss of 
generality that the sample satisfies the requirements in Lemmas~\ref{lemma2} and \ref{degreebound}.

Consider now the optimal $1/3$-$2/3$ bisections of $G_1$ and $G_2$. By Lemma~\ref{lemma1}, we find two disjoint sets, each of the same size, at least $n/6$, which are separated from each other by both optimal $1/3$- $2/3$ bisections. Call the union of these 
two sets $Y_2$, and observe $|Y_2|\geq n/3$.

Let $G_3|_{Y_2}$ denote the restriction of $G_3$ to the vertices of $Y_2$ and consider now the optimal $1/3$-$2/3$ bisection of $G_3|_{Y_2}$, 
and an equipartition of $Y_2$ similar to that which we used to define $Y_2$.
Lemma~\ref{lemma1} applies again, resulting in  two disjoint, equal sized subsets of $Y_2$, of size at least $n/(3\cdot 6)$,   which are subsets of different
sides of all three partitions we have considered so far. Call the union of 
these two sets $Y_3$, and observe $|Y_3|\geq n/3^2$. 

If for some $3\leq i\leq k-1 $ the set $Y_i$ is already defined, 
consider now the optimal $1/3$-$2/3$ bisection of the graph 
$G_{i+1}|_{Y_i}$, and the partition %\arran{replace ``equipartition'' by ``partition''; we don't know that the two sets have equal size.} 
of $Y_i$ from which we defined 
$Y_i$. Lemma~\ref{lemma1} applies again, resulting in  two equal sized disjoint 
 subsets of $Y_i$ of size at least 
$n/(3^{i-1}\cdot 6)$, 
which 
are subsets of different
sides of all $i+1$ partitions considered so far. Call the union of 
these two sets $Y_{i+1}$, and observe $|Y_{i+1}|\geq n/ 3^{i}$.

Let $A$ and $B$ denote the two disjoint, equal sized sets, of size at least $n/(3^{k-2}\cdot 6)$, whose union defined $Y_k$.
The following inequality follows from our construction:

$$e_G(A,B)\leq b(G_k|_{Y_{k-1}})+b(G_{k-1}|_{Y_{k-2}})+\ldots 
+b(G_3|_{Y_{2}})+b(G_2)+b(G_1).$$

Observe that $e_G(A,B)$ is large by Lemma~\ref{lemma2}. Therefore, at least one of 
the $1/3$-$2/3$ bisection widths of the $k$ graphs on the right-hand side must be large; that is, at least $\frac{n^2p}{k\cdot2(6\cdot 3^{k-2})^2}$.  (Note that the additional factor of $1/k$ comes from the fact that there are $k$ total summands on the right hand side.) %\arran{I think we need a factor of $1/k$ since there are $k$ total summands on the right hand side}.  
In particular, it is at least a constant fraction of $n^2p$.  By Theorem ~\ref{bisection}, the bisection width of this graph is large enough to prove Theorem \ref{spenceThm1}.

\end{proof}

\section{Proof of Theorem~\ref{newthm3}}\label{proofnewthm3}

For any $X,Y\subseteq V(G)$, let $E(X,Y)$ be the set of edges with one vertex in $X$ and the other in $Y$ and denote the order of $E(X,Y)$ by $e(X,Y)$.

The following variant of \emph{Expander Mixing Lemma} is a slight extension by Beigel, Margulis and Spielman \cite{BMS} of a bound originally proven by Alon and Chung\cite{alon1988explicit}.

\begin{theorem}\label{expandermixinglemma}
{\normalfont\cite{BMS}}
Let $G$ be a $d$-regular graph such that every eigenvalue except the largest has absolute value at most $\mu$.  Let $X,Y\subset V$ have sizes $\alpha n$ and $\beta n$, respectively. Then 
\begin{align*}
|e(X,Y)-\alpha\beta dn|\le \mu n \sqrt{(\alpha-\alpha^2)(\beta-\beta^2)}
\end{align*}

\end{theorem}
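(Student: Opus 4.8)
The plan is to prove the lemma by a direct spectral computation on the adjacency matrix of $G$. Let $A$ denote the adjacency matrix of $G$. Since $A$ is real symmetric, it admits an orthonormal basis of eigenvectors $v_1, v_2, \ldots, v_n$ with real eigenvalues $\lambda_1 \ge \lambda_2 \ge \cdots \ge \lambda_n$. Because $G$ is $d$-regular, the all-ones vector is an eigenvector with eigenvalue $d$, and this is the largest eigenvalue, so I may take $v_1 = \mathbf{1}/\sqrt{n}$ and $\lambda_1 = d$. By hypothesis, $|\lambda_i| \le \mu$ for every $i \ge 2$.

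Writing $\mathbf{1}_X$ and $\mathbf{1}_Y$ for the indicator vectors of $X$ and $Y$, the number of edges between these (disjoint) sets is exactly the bilinear form $e(X,Y) = \mathbf{1}_X^{T} A \mathbf{1}_Y$, since each such edge contributes the single ordered pair with its $X$-endpoint first. First I would expand both indicators in the eigenbasis as $\mathbf{1}_X = \sum_i a_i v_i$ and $\mathbf{1}_Y = \sum_i b_i v_i$, which gives $e(X,Y) = \sum_i \lambda_i a_i b_i$. The coefficients along $v_1$ are $a_1 = \langle \mathbf{1}_X, \mathbf{1}\rangle/\sqrt{n} = \alpha n/\sqrt{n} = \alpha\sqrt{n}$ and likewise $b_1 = \beta\sqrt{n}$, so the $i=1$ term contributes exactly $\lambda_1 a_1 b_1 = \alpha\beta d n$. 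This isolates the main term and yields the identity $e(X,Y) - \alpha\beta dn = \sum_{i\ge 2} \lambda_i a_i b_i$.

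The remaining step is to bound the error sum. Using $|\lambda_i| \le \mu$ for $i \ge 2$ and then the Cauchy--Schwarz inequality,
\[
\Bigl| \sum_{i \ge 2} \lambda_i a_i b_i \Bigr| \le \mu \sum_{i \ge 2} |a_i b_i| \le \mu \Bigl( \sum_{i \ge 2} a_i^2 \Bigr)^{1/2} \Bigl( \sum_{i \ge 2} b_i^2 \Bigr)^{1/2}.
\]
Finally I would evaluate the tail norms by Parseval's identity: since $\sum_i a_i^2 = \|\mathbf{1}_X\|^2 = |X| = \alpha n$ and $a_1^2 = \alpha^2 n$, we get $\sum_{i\ge2} a_i^2 = n(\alpha - \alpha^2)$, and symmetrically $\sum_{i\ge2} b_i^2 = n(\beta-\beta^2)$. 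Substituting gives $|e(X,Y) - \alpha\beta dn| \le \mu n\sqrt{(\alpha-\alpha^2)(\beta-\beta^2)}$, as claimed.

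This argument is clean and essentially mechanical once the spectral setup is in place; there is no serious analytic obstacle. The only points requiring care are bookkeeping rather than difficulty: correctly identifying $e(X,Y)$ with the bilinear form $\mathbf{1}_X^{T}A\mathbf{1}_Y$ (which relies on $X$ and $Y$ being disjoint, as in the intended application), and extracting the $v_1$-component exactly so that the main term matches $\alpha\beta dn$ with no remainder. After that, the whole bound is a one-line combination of the eigenvalue hypothesis, Cauchy--Schwarz, and Parseval.
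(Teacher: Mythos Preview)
Your argument is the standard and correct proof of the Expander Mixing Lemma: expand the indicator vectors in the eigenbasis of the adjacency matrix, peel off the top-eigenvector contribution to get the main term $\alpha\beta dn$, and bound the remainder by the eigenvalue hypothesis together with Cauchy--Schwarz and Parseval. The bookkeeping you flag (identifying $e(X,Y)$ with $\mathbf{1}_X^{T}A\mathbf{1}_Y$ under disjointness) is exactly the right caveat, and in the paper's application the sets are indeed disjoint.

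Note, however, that the paper does \emph{not} supply its own proof of this theorem; it is quoted as a known result of Beigel, Margulis and Spielman (extending Alon--Chung) and used as a black box in the proof of Theorem~\ref{newthm3}. So there is nothing to compare against---your write-up simply fills in the standard argument that the paper elects to cite rather than reproduce.
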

%\arran{I think this statement should begin ``Let $G$ be...'' instead; this statement doesn't seem to be about random regular graphs.}

The following theorem of Friedman \cite{friedman} gives a bound on $\mu$ for random regular graphs.
%\arran{I think we should add ``for random regular graphs'' to the end of this sentence.}

\begin{theorem}\label{eigenvaluebound}
{\normalfont\cite{friedman}}
Fix a real $\varepsilon>0$ and a positive integer $d\ge 2$.  Let $\lambda_i$ denote the $i^{\rm th}$ eigenvalue of the adjacency matrix of $G$.  Then there exists a constant $c$ such that for a random $d$-regular graph $G$ in $\mathcal{L}_n$, we have with probability $1-o(1)$ (as $n\to \infty$) that for all $i>1$, 
\begin{align*}
|\lambda_i(G)|\le 2\sqrt{d-1}+\varepsilon
\end{align*}

\end{theorem}

\begin{proof}[Proof of Theorem~{\normalfont\ref{newthm3}}]

Theorem \ref{eigenvaluebound} gives that the $\mu$ in Theorem \ref{expandermixinglemma} is at most $2\sqrt{d-1}+\varepsilon$ with high probability when $n$ is large.  As in Theorem \ref{newthm3}, let $\alpha=\beta=\frac{1}{6\cdot 3^{k-2}}$.  This gives that with high probability,
\[\bigg|e(X,Y)-\frac{dn}{(6\cdot 3^{k-2})^2}\bigg|\le 2n\sqrt{d-1}\sqrt{\bigg(\frac{1}{6\cdot 3^{k-2}}-\bigg(\frac{1}{6\cdot 3^{k-2}}\bigg)^2 \bigg)^2}
\]
The radical on the right hand side is a constant for fixed $k$.  Let twice this constant be defined as $c_k$.  Then with high probability, $e(X,Y)$ differs from $\frac{dn}{(6\cdot 3^{k-2})^2}$ by at most $c_k n\sqrt{d-1}$, where $c_k<1$ for all $k\ge 2$.  Straightforward computation shows that when $d\ge (4\cdot 6 \cdot3^{k-2})^2=:c_0(k)$, it will be the case that the right hand side of the inequality at most $\frac{dn}{2(6\cdot 3^{k-2})^2}$ which completes the proof.
%is $o(dn)$.  Therefore set $c_0(k)=(4\cdot 6 \cdot 3^{k-2})^2$.  Since $c_k n\sqrt{d-1}=o(dn)$ for $d\ge c_0(k)$, %sufficiently large (that is, $d>c_0(k)$), 
%this is sufficient to prove Theorem \ref{newthm3}. 
\end{proof}

 %\arran{I think we need to exercise a bit more care if $d$ is constant.  Notice that if $d$ is constant then the little-oh claim in the last sentence isn't true.  Theorem \ref{newthm3} still holds for $d$ constant, but in this case we can't disregard the radical on the right hand side of the displayed equation.  I seem to be getting a lower bound on $c_0 (k)$ of $(4\cdot 6\cdot 3^{k-2})^2$ but I may have made an arithmetic mistake.  This should be checked and reflected in the proof (it'll be a bit messy but shouldn't take more than a couple of lines).}
%\libby{I'm a bit confused here; since we've stated in the last sentence that the little-oh statement is true for $d$ sufficiently large, shouldn't that be enough?}

\section{Proof of Theorem \ref{randRegSpencerThm}}\label{proofofthmrandRegSpencerThm}

In this section we will prove Theorem \ref{randRegSpencerThm}: Let $G$ be a random $d$-regular graph in $\mathcal{L}_n$. Then
\[
\cro_k(G)=\Theta((dn)^2).
\]

\begin{proof}[Proof of Theorem~{\normalfont\ref{randRegSpencerThm}}]
Looking closely at the proof of Theorem~\ref{spenceThm1}, we realize any graph $G$ that satisfies the conditions in Theorem~\ref{newthm3} (about edge density between two sufficiently large vertex sets) and Lemma~\ref{degreebound} (about maximum degree) will have large $k$-planar crossing numbers. In our case, we can replace $p$ by $d/n$, the density of our $d$-regular graph $G$. Clearly $G$ has maximum degree $d$ which satisfies Lemma~\ref{degreebound}. %By Theorem~\ref{newthm3} it satisfies Lemma~\ref{degreebound}. 
Therefore Theorem~\ref{randRegSpencerThm} holds.
\end{proof}

\section*{Acknowledgements}

\thanks{This material is based upon work that started at the Mathematics Research Communities workshop ``Beyond Planarity: Crossing Numbers of Graphs'', organized by the American Mathematical Society, with the 
support of the National Science Foundation under Grant Number DMS 1641020.}

\thanks{The fourth and the sixth authors were also supported in part by NSF DMS grants 1300547 and 1600811.  We would like to thank the  organizers of the workshop for creating a stimulating and productive environment, and Lutz Warnke for many helpful comments.}

\bibliographystyle{amsplain}
\bibliography{vdec}

\providecommand{\bysame}{\leavevmode\hbox to3em{\hrulefill}\thinspace}
\providecommand{\MR}{\relax\ifhmode\unskip\space\fi MR }
% \MRhref is called by the amsart/book/proc definition of \MR.
\providecommand{\MRhref}[2]{%
  \href{http://www.ams.org/mathscinet-getitem?mr=#1}{#2}
}
\providecommand{\href}[2]{#2}
\begin{thebibliography}{1}

\bibitem{alon1988explicit}
Noga Alon and Fan~RK Chung, \emph{Explicit construction of linear sized
  tolerant networks}, Discrete Mathematics \textbf{72} (1988), 15--19.

\bibitem{AS}
Noga Alon and Joel Spencer, \emph{The probabilistic method}, Wiley, 2008.

\bibitem{BMS}
Richard Beigel, Grigorii Margulis, and Daniel~A Spielman, \emph{Fault diagnosis
  in a small constant number of parallel testing rounds}, Proceedings of the
  fifth annual ACM symposium on Parallel algorithms and architectures, ACM,
  1993, pp.~21--29.

\bibitem{friedman}
Joel Friedman, \emph{A proof of {A}lon's second eigenvalue conjecture and
  related problems}, Mem. Amer. Math. Soc. \textbf{195} (2008), no.~910,
  viii+100. \MR{2437174}

\bibitem{PSS}
Janos Pach, Farhad Shahrokhi, and Mario Szegedy, \emph{Applications of the
  crossing number}, Algorithmica \textbf{16} (1996), no.~1, 111--117.
  \MR{1394496}

\bibitem{spencer}
Joel Spencer, \emph{The biplanar crossing number of the random graph}, Towards
  a theory of geometric graphs, Contemp. Math., vol. 342, Amer. Math. Soc.,
  Providence, RI, 2004, pp.~269--271. \MR{2065270}

\end{thebibliography}

\end{document}